\newtheorem{theorem}{Theorem}[section]
\newtheorem{corollary}{Corollary}[section]
\newtheorem{conjecture}{Conjecture}[section]
\title{On the locating chromatic number of infinite trees}
\author
{Yusuf Hafidh $^{1}$, Devi Imulia Dian Primaskun $^{1}$, Edy Tri Baskoro\footnote{corresponding author} $^{1,2}$\\
	\\
	\normalsize{$^{1}$ \ Combinatorial Mathematics Research Group, Faculty of Mathematics }\\
        \normalsize and Natural Sciences, Institut Teknologi Bandung, Indonesia
\\
        \normalsize{$^{2}$ \ Center for Research Collaboration on Graph Theory and Combinatorics, Indonesia}\\
        \\
	\normalsize{Emails: yusuf.hafidh@itb.ac.id, imuliadevi@gmail.com,  ebaskoro@itb.ac.id}\\
}
\date{}
\begin{document} 
	
	\baselineskip18pt
	
	\maketitle 
	
\begin{abstract}
	The locating chromatic number of a graph is the smallest integer $n$ such that there is a proper $n$-coloring $c$ and every vertex has a unique vector of distances to colors in $c$. We explore the necessary conditions and provide sufficient conditions for an infinite tree to have a finite locating chromatic number. We also give an algorithm for computing the locating coloring of trees that works for both finite and infinite trees. 
\end{abstract}

Keywords: locating chromatic number, infinite graph, tree

Mathematics Subject Classification: 05C12, 05C63, 05C15

\section{Introduction}
A graph is an {\em infinite graph} if it has an infinite number of vertices.
Let $G=(V, E)$ be a simple graph. For an integer $k\geq 1$, a map $c:V(G)\to \{1,2, \cdots k\}$ is called a \textit{k-coloring} of $G$ if $c(u)\neq c(v)$ for any two adjacent vertices $u,v \in V(G)$. The \textit{chromatic number} of $G$, denoted by $\chi(G)$, is the smallest positive integer $k$ such that $G$ has a \textit{k-coloring}. Let $\{X_1,X_2, \cdots ,X_k\}$ be a partition of $V$ induced by a $k$-coloring $c$. The {\em color code} $a_c(v)$ of a vertex $v$ is the ordered $k-$tuple $(d(v,X_1), d(v,X_2), \cdots, d(v,X_k))$, where $d(v,X_i) = \min\{d(v,x)|x \in X_i\}$.
If all vertices of $G$ have different color codes, then $c$ is called a {\em locating $k$-coloring} of $G$.
The {\em locating-chromatic number} of $G$, denoted by $\chi_L(G)$, is the smallest positive integer $k$ such that $G$ has a locating $k$-coloring. If there are no integer $k$ such that $G$ has a locating $k$-coloring, then $\chi_L(G)=\infty$.

A graph $G$ has {\em finite degree} if the degree of every vertex in $G$ is finite. We say that the graph $G$ has {\em bounded degree} if there is an integer $M$ such that $deg(v)\leq M$ for all $v\in V(G)$, this is equivalent with $\Delta(G)$ exists. Note that bounded degree implies finite degree but not otherwise, see Figure \ref{bounddeg}.

\begin{figure}
    \centering
    \begin{tikzpicture}[scale=0.7]
        \draw[color=blue,fill=yellow,thick] (0,0)--(10,0);
        \draw[color=blue,fill=yellow,thick] (2,1)--(2,0);
        \draw[color=blue,fill=yellow,thick] (2.8,1)--(3,0) (3.2,1)--(3,0);
        \draw[color=blue,fill=yellow,thick] (4,1)--(4.4,0) (4.4,1)--(4.4,0) (4.8,1)circle(0.12)--(4.4,0);
        \draw[color=blue,fill=yellow,thick] (5.6,1)--(6.2,0) (6,1)--(6.2,0) (6.4,1)--(6.2,0) (6.8,1)--(6.2,0);
        \draw[color=blue,fill=yellow,thick] (7.6,1)--(8.4,0) (8,1)--(8.4,0) (8.4,1)--(8.4,0) (8.8,1)--(8.4,0) (9.2,1)--(8.4,0);
        \draw[color=blue,fill=yellow,thick] (2,1)circle(0.12);
        \draw[color=blue,fill=yellow,thick] (2.8,1)circle(0.12) (3.2,1)circle(0.12);
        \draw[color=blue,fill=yellow,thick] (4,1)circle(0.12) (4.4,1)circle(0.12) (4.8,1)circle(0.12);
        \draw[color=blue,fill=yellow,thick] (5.6,1)circle(0.12) (6,1)circle(0.12) (6.4,1)circle(0.12) (6.8,1)circle(0.12);
        \draw[color=blue,fill=yellow,thick] (7.6,1)circle(0.12) (8,1)circle(0.12) (8.4,1)circle(0.12) (8.8,1)circle(0.12) (9.2,1)circle(0.12);
        \draw[color=blue,fill=yellow,thick] (0,0)circle(0.12)node[below]{$v_1$} (1,0)circle(0.12)node[below]{$v_2$} (2,0)circle(0.12)node[below]{$v_3$} (3,0)circle(0.12)node[below]{$v_4$}  (4.4,0)circle(0.12)node[below]{$v_5$}  (6.2,0)circle(0.12)node[below]{$v_6$}  (8.4,0)circle(0.12)node[below]{$v_7$};
        \foreach \x in {10.7,11,11.3}
        \draw[color=blue,fill=blue] (\x,0)circle(0.05);
    \end{tikzpicture}
    \caption{A graph with finite and unbounded degree.}
    \label{bounddeg}
\end{figure}
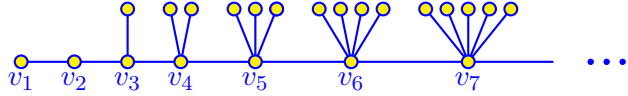

A finite path $P_n$ on $n$ vertices is usually defined for $n\geq2$. In this paper, we allow $n=1$ and hence $P_1=K_1$. Adopting the notation in \cite{DAM12}, $P_{1\infty}$ and $P_{2\infty}$ is an infinite path in one end and both ends, respectively. The locating chromatic number of a path is given in the following theorem.
\begin{theorem}\label{path}
    Let $P$ be a path (finite or infinite), then 
    \[
    \chi_L(P)=\begin{cases}
    1, & P=P_1,\\
    2, & P=P_2,\\
    3, & P=P_{1\infty}, P_{2\infty}, P_n, n\geq 3.
    \end{cases}
    \]
\end{theorem}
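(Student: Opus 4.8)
The plan is to dispatch the two small cases by direct inspection and then treat $P_n$ ($n\geq 3$), $P_{1\infty}$, and $P_{2\infty}$ together by establishing the lower bound $\chi_L\geq 3$ and then exhibiting explicit locating $3$-colorings. For $P_1=K_1$ the single vertex receives color $1$; the coloring is vacuously proper and the lone color code $(0)$ cannot clash with anything, so $\chi_L(P_1)=1$. For $P_2$ properness already forces $k\geq 2$, and coloring the two vertices $1$ and $2$ gives codes $(0,1)$ and $(1,0)$, so $\chi_L(P_2)=2$.

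For the remaining paths, the lower bound comes from the fact that every proper $2$-coloring of a path is the alternating one up to swapping the two colors. Hence, as soon as the path contains three consecutive vertices, there are two same-colored vertices each flanked by the opposite color; both then have code $(0,1)$ (or both $(1,0)$), contradicting the locating property. This rules out $k=2$ and yields $\chi_L\geq 3$ for $P_n$ with $n\geq 3$, for $P_{1\infty}$, and for $P_{2\infty}$.

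The upper bound is constructive. For $P_n$ ($n\geq3$) with vertices $v_1,\dots,v_n$, and for $P_{1\infty}$ with vertices $v_1,v_2,\dots$, I color $v_1$ with $3$ and alternate $1,2,1,2,\dots$ from $v_2$ onward. Then $X_3=\{v_1\}$ is a single \emph{landmark}, so the third coordinate of $a_c(v_i)$ equals $d(v_i,v_1)=i-1$, which is strictly increasing in $i$. This coordinate alone separates all color codes, and since $n\geq 3$ (resp.\ the path is infinite) all three classes are nonempty so every code is well defined. Thus $\chi_L=3$ in these cases.

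The delicate case is $P_{2\infty}$, where a single landmark at $v_0$ fails: the distance $d(v_i,v_0)=|i|$ is symmetric, so $v_i$ and $v_{-i}$ would collide unless the two rays are colored differently. I therefore break the symmetry by setting $c(v_0)=3$ and choosing the alternation on the negative ray out of phase with the positive ray, e.g.\ on the right $c(v_i)=1$ for odd $i$ and $2$ for even $i$, and on the left the reverse. Then for each fixed landmark value $j=|i|$ the two candidates $v_j$ and $v_{-j}$ carry opposite patterns in the first two coordinates, so $a_c(v_j)\neq a_c(v_{-j})$; within each ray the landmark coordinate $j$ already distinguishes vertices, and $v_0$ is the unique vertex with landmark coordinate $0$. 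Checking that no two codes coincide across the four parity/side combinations is the main technical step, and the out-of-phase choice is precisely what makes it go through, completing the proof that $\chi_L(P_{2\infty})=3$.
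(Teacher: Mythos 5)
The paper states this theorem without proof, treating it as known (the finite cases go back to Chartrand et al.\ \cite{ref10}), so there is no in-paper argument to compare against. Your proof is correct and is the standard one: the lower bound via the observation that a proper $2$-coloring of a path must alternate (forcing two vertices with code $(0,1)$ once there are three consecutive vertices), and the upper bound via a single color-$3$ landmark whose distance coordinate separates vertices along a ray, with the out-of-phase alternation on the two rays of $P_{2\infty}$ correctly breaking the $v_j$ versus $v_{-j}$ symmetry through the $(0,1)$ versus $(1,0)$ prefix. The only point worth making explicit in the infinite cases is that every color code is finite, which holds because colors $1$ and $2$ each occur within distance $2$ of every vertex and color $3$ occurs exactly once at finite distance.
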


J. C\'aceres et al. \cite{DAM12} showed that infinite trees have finite metric dimension if and only if its degree is bounded and has a finite number of branches. However, there are infinite trees with infinite branches that have finite locating chromatic number. One example is the comb graph as depicted in Figure \ref{comb}.
\begin{figure}
    \centering
    \begin{tikzpicture}[scale=0.7]
        \draw[color=blue,thick] (-7,1)--(7,1) (0,0)node[below]{$4$}--(0,1)node[above]{$3$};
        \foreach \x in {1,3,5,-2,-4,-6} \draw[color=blue,thick] (\x,0)node[below]{$4$}--(\x,1)node[above]{$1$};
        \foreach \x in {-1,-3,-5,2,4,6} \draw[color=blue,thick] (\x,0)node[below]{$4$}--(\x,1)node[above]{$2$};
        \foreach \x in {-6,...,6} \draw[color=blue,fill=yellow,thick] (\x,0)circle(0.12) (\x,1)circle(0.12);
        \foreach \x in {7.7,8,8.3}
        \draw[color=blue,fill=blue] (\x,1)circle(0.05) (-\x,1)circle(0.05);
    \end{tikzpicture}
    \caption{A locating coloring of the comb graph.}
    \label{comb}
\end{figure}
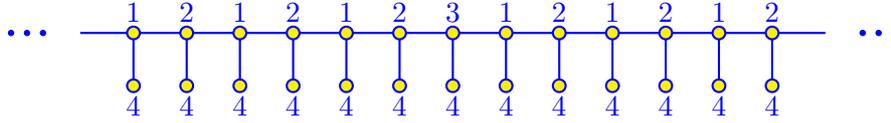
Having a bounded degree is a requirement for an infinite graph to have finite locating chromatic number. The relation between the locating chromatic number of a finite graph with its maximum degree is given in the following theorem.
\begin{theorem} (\cite{YH21})
\label{Delta xl}
If $G$ is a graph with $\chi_L(G)=k\geq3$, then $\Delta(G)\leq 4\cdot3^{k-3}$.
\end{theorem}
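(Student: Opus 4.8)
The plan is to argue locally around an arbitrary vertex $v$: fix a locating $k$-coloring $c$, set $a=c(v)$, and bound the number of neighbours of $v$ by counting the color codes they can possibly carry. The two facts I would establish first are elementary. Since $c$ is proper and $v\in X_a$, any neighbour $u$ satisfies $d(u,X_a)=1$ (it lies at distance $1$ from $v\in X_a$ and cannot itself belong to $X_a$), so the $a$-th coordinate of $a_c(u)$ is always $1$; and the $c(u)$-th coordinate is of course $0$. Moreover, for adjacent vertices the distances to any fixed class differ by at most $1$, so $a_c(u)$ and $a_c(v)$ differ by at most $1$ in every coordinate. Because $c$ is locating, distinct neighbours have distinct color codes, so it suffices to bound the number of admissible codes.

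Next I would organise the count around the set $S$ of colors actually appearing on $N(v)$; write $s=|S|$ and note $a\notin S$, hence $s\le k-1$. A color $j$ lies in $S$ precisely when $d(v,X_j)=1$, so the coordinates of $a_c(v)$ split as: the $a$-th equal to $0$, the $s$ coordinates indexed by $S$ equal to $1$, and the remaining $k-s-1$ coordinates at least $2$. Now fix a neighbour $u$ with $c(u)=j\in S$. Its code is forced at two positions ($1$ at $a$ and $0$ at $j$). At each position $i\in S\setminus\{j\}$ the value $d(u,X_i)$ must stay $\ge 1$ (as $c(u)\ne i$) while being within $1$ of $d(v,X_i)=1$, leaving only the two choices $1,2$. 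At each of the $k-s-1$ positions with $d(v,X_i)\ge 2$ there are at most three choices. Hence there are at most $2^{s-1}3^{k-s-1}$ possible codes for neighbours of a given color $j$.

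Summing over the $s$ colors in $S$ gives $\deg(v)\le s\,2^{s-1}3^{k-s-1}=:f(s)$. The final step is the discrete optimisation of $f$ over $1\le s\le k-1$: the ratio $f(s+1)/f(s)=\tfrac{2(s+1)}{3s}$ exceeds $1$ only for $s=1$ and equals $1$ at $s=2$, so $f$ is maximised at $s=2$ (equivalently $s=3$), where $f(2)=4\cdot 3^{k-3}$. Since $v$ was arbitrary, $\Delta(G)\le 4\cdot 3^{k-3}$.

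The hard part is not the local distance facts but squeezing out the sharp constant. A crude count that allows all three values at every free coordinate only yields $\deg(v)\le (k-1)3^{k-2}$; the improvement to $4\cdot 3^{k-3}$ comes from two refinements that must be handled carefully — recognising that the coordinates indexed by $S$ admit only two values (because a second zero coordinate would force a second color on $u$), and then carrying out the optimisation over $s$ rather than bounding each color class by the uniform $3^{k-2}$.
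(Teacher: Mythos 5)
The paper does not prove this theorem; it is quoted from the cited reference \cite{YH21}, so there is no in-text proof to compare against. Your argument is correct and complete as it stands: the local facts (the $a$-th coordinate of every neighbour's code is forced to $1$, the coordinates indexed by $S\setminus\{j\}$ can only take the values $1$ or $2$, and the remaining $k-s-1$ coordinates take at most $3$ values) are all justified, the count $\deg(v)\le s\,2^{s-1}3^{k-s-1}$ follows since a locating coloring gives distinct codes to distinct neighbours, and the ratio test correctly locates the maximum at $s=2$ (and $s=3$), yielding $4\cdot 3^{k-3}$ for $k\ge 3$. This is the standard neighbourhood-code counting argument one would expect the cited source to use, and it indeed transfers verbatim to infinite graphs, which is the point the present paper needs.
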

By generalizing the proof of Theorem \ref{Delta xl} in \cite{YH21}, we could see that the theorem still holds for infinite graphs, and hence we have the following theorem.
\begin{theorem}
    If $G$ is an infinite graph with finite locating chromatic number, then $G$ has bounded degree.
\end{theorem}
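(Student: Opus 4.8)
The plan is to show that the degree bound of Theorem \ref{Delta xl} is really a \emph{local} statement, so that it transfers verbatim to infinite graphs; the theorem then follows at once. First I would fix an infinite graph $G$ with $\chi_L(G)=k<\infty$ and record that necessarily $k\geq 3$. Indeed, a locating coloring using at most two colors must distinguish any two equally-colored vertices solely by their distance to the single remaining color class, and one checks that $\chi_L(G)\leq 2$ holds only for $G\in\{P_1,P_2\}$; since these are finite, an infinite $G$ with finite locating chromatic number has $k\geq 3$, which is exactly the regime in which Theorem \ref{Delta xl} applies.

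Next I would reproduce, for an arbitrary vertex $v$, the counting behind Theorem \ref{Delta xl}. Writing $t=c(v)$, every neighbour $u$ of $v$ satisfies $d(u,X_t)=1$, because $v\in X_t$ lies at distance one from $u$ while $u\notin X_t$; moreover $d(u,X_{c(u)})=0$. For each of the remaining colors $j$, adjacency of $u$ and $v$ gives the triangle inequality $|d(u,X_j)-d(v,X_j)|\leq 1$, so $d(u,X_j)$ is confined to at most three values around $d(v,X_j)$. As the neighbours of $v$ must have pairwise distinct color codes, their number is at most the number of admissible codes, a quantity depending only on $k$; the more careful bookkeeping of \cite{YH21} sharpens this to $4\cdot 3^{k-3}$.

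The crucial point, and the step I expect to require the most care, is that none of these constraints uses the finiteness of $G$: the identity $d(u,X_t)=1$ and the estimate $|d(u,X_j)-d(v,X_j)|\leq 1$ are instances of the triangle inequality for the distance-to-a-set function, valid in any graph. The one genuinely infinite subtlety is that each $d(v,X_i)$ must be a well-defined finite integer for the color codes to be comparable tuples; this is guaranteed by the existence of a locating $k$-coloring, whose codes are by definition finite $k$-tuples (equivalently, each color class is nonempty and lies at finite distance from $v$). A coordinate that were infinite would merely be constant across the component of $v$ and could only decrease the count, so it poses no threat to the bound. With well-definedness secured, the purely local count applies to every vertex of $G$ simultaneously.

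Finally I would conclude that $\deg(v)\leq 4\cdot 3^{k-3}$ for every $v\in V(G)$, with a right-hand side independent of $v$. Hence $\Delta(G)$ exists and satisfies $\Delta(G)\leq 4\cdot 3^{k-3}$, so $G$ has bounded degree, as claimed.
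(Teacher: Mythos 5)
Your proposal is correct and follows the same route as the paper, which proves this theorem precisely by observing that the counting argument behind Theorem~\ref{Delta xl} is local and carries over to infinite graphs unchanged. In fact you supply more detail than the paper does (the reduction to $k\geq 3$, the triangle-inequality estimates on neighbours' codes, and the remark that possibly infinite coordinates only shrink the count), all of which is sound.
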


In a finite tree $T$, an {\em end-path} is a path connecting a leaf in $T$ to its nearest branch vertex. We generalize the definition of an end-path to also include an infinite path starting from a branch vertex with all vertices other than the branch vertex having degree two in $T$. If there is an end-path from a branch, we call it an {\em end-branch}. A {\em palm} is an end-branch together with all its end-paths.

\section{Infinite trees with finite locating chromatic number}

We begin this section by giving a conjecture on the characterization of infinite trees with finite locating chromatic number. We prove that one implication in this conjecture is true and give examples to support the other implication.
We define an operation $f$ on a tree $T$, namely $f(T)$ is the graph obtained by removing all end-paths of $T$ excluding the end-branches, and for $n\geq 2$, $f^n=f^{n-1}\circ f$.

\begin{conjecture} \label{TreeChar}
\label{Conj1}
    Let $T$ be an infinite tree with bounded degree. Then, $\chi_L(T) < \infty$ if and only if there is an integer $n$ such that $f^n(T)=P$ where $P$ is a path (including an infinite path, a finite path, or $K_1$).
\end{conjecture}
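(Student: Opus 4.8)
The plan is to prove the two implications of Conjecture~\ref{TreeChar} separately, organizing everything around the quantity $\beta(T)=\sup\{n\ge 0 : f^{n}(T)\neq P \text{ for every path } P\}$, the \emph{branch-depth} of $T$. Since $f$ fixes paths (a path has no branch vertex, hence no end-path to remove), the condition ``there is an integer $n$ with $f^{n}(T)=P$'' is exactly $\beta(T)<\infty$. So the two tasks are: (i) \emph{sufficiency}, $\beta(T)<\infty \Rightarrow \chi_L(T)<\infty$; and (ii) \emph{necessity}, $\chi_L(T)<\infty \Rightarrow \beta(T)<\infty$, equivalently (contrapositive) $\beta(T)=\infty \Rightarrow \chi_L(T)=\infty$.

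For sufficiency I would argue constructively, which also furnishes the coloring algorithm. Put $N=\beta(T)+1$, so the ``spine'' $S:=f^{N}(T)$ is a path; by Theorem~\ref{path} it has a locating coloring with at most $3$ colors, and by the definition of a locating coloring the color codes along $S$ are then already pairwise distinct. I would rebuild $T$ from $S$ in $N$ layers, re-attaching at each stage the end-paths that $f$ had removed, and extend the coloring outward. Because $T$ has bounded degree and every decoration is peeled off within $N$ rounds, the decorations hanging at a given spine vertex form a boundedly-branching configuration, so a fixed finite palette of extra colors (cycled along $S$) suffices to give every decoration vertex a code that encodes both its depth off the spine and, through the distinct spine codes, its point of attachment. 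Vertices in decorations at different spine vertices are then separated by the injectivity of the spine codes, vertices within a single decoration by their local color pattern, and infinite end-paths (rays) are treated identically since each is removed in one round. The bookkeeping---verifying that re-coloring a decoration never destroys a previously established distinction---is routine but tedious.

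The necessity direction is the main obstacle, and is exactly where I expect the difficulty (it is why the statement is only a conjecture). One would fix a finite locating $k$-coloring and try to manufacture two vertices with equal codes whenever $\beta(T)=\infty$. The natural route is to use $\beta(T)=\infty$ to extract, via K\H{o}nig's lemma, branch vertices surviving arbitrarily many rounds of peeling---each carrying three subtrees that resist peeling---and then apply pigeonhole to the finitely many colored neighborhood-types, iterating the degree bound of Theorem~\ref{Delta xl} through the layers. The genuine trap is that superficial branching does not suffice: the comb of Figure~\ref{comb} contains arbitrarily deep complete-binary-tree subgraphs yet has $\chi_L=4$, precisely because its branching is collapsible ($\beta=0$). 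Hence any valid argument must exploit the non-collapsibility measured by $\beta$ itself, and must control how \emph{distant} color classes influence a code (a vertex's code depends on the nearest representative of each color, which may lie far away), so that the extracted twins truly agree in all $k$ coordinates. Making this control uniform over all finite colorings is the crux; absent it, I would settle for establishing necessity on structured families of $\beta=\infty$ trees, which is the ``examples'' part.
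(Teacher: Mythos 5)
Your proposal matches the paper's treatment of this conjecture: the $(\Leftarrow)$ direction is handled exactly as in Theorem~\ref{<-}, by peeling end-paths one layer at a time and extending a locating coloring of $f(T)$ to one of $T$ with at most $\Delta$ fresh colors per layer (hence finitely many in total, starting from the $3$-coloring of the path $f^{n}(T)$), while the $(\Rightarrow)$ direction is left unproved and supported only by counting arguments on specific $\beta=\infty$ families, which is precisely what the paper does with $T_k$ and $G_n$. Your diagnosis that the necessity direction is the open crux, and that any argument must exploit non-collapsibility rather than mere branching (the comb being the cautionary example), is consistent with why the statement remains a conjecture in the paper.
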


We prove the $(\Leftarrow)$ part of Conjecture \ref{Conj1} by repeatedly applying the following theorem.

\begin{theorem} \label{<-}
    Let $T$ be an infinite tree with bounded degree. Suppose that $\chi_L(f(T))$ is finite, then $\chi_L(T)$ is finite.
\end{theorem}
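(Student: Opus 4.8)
The plan is to start from a locating $k'$-coloring $c'$ of $f(T)$ (which exists since $\chi_L(f(T)) = k' < \infty$) and to extend it to a locating coloring of $T$ using only $\Delta := \Delta(T)$ additional colors, yielding the bound $\chi_L(T) \le k' + \Delta < \infty$. The structural point is that $T$ is obtained from $f(T)$ by attaching, at certain vertices of $f(T)$ (the end-branches), a family of pendant paths, namely the end-paths removed by $f$, which I will call \emph{legs}. If $f(T)=T$ there is nothing to prove, and if $T$ is itself a path the claim follows from Theorem \ref{path}; so we may assume $T$ has a branch vertex, whence $\Delta \ge 3$.

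On $f(T)$ I keep the coloring $c'$, using the colors $\{1,\dots,k'\}$, and I color the legs using only the fresh colors $\{k'+1,\dots,k'+\Delta\}$. At each end-branch $b$, I list its legs as $0,1,\dots,t_b-1$ with $t_b \le \Delta$, and give the vertex at depth $i \ge 1$ on leg $j$ the color $((i+j)\bmod \Delta) + k' + 1$. This is a proper coloring: consecutive vertices of a leg differ because $\Delta \ge 2$, and the depth-$1$ vertex of a leg differs from $b$ because it carries a fresh color. Since the legs carry none of the old colors, for every $u \in f(T)$ and every old color $c$ we have $d_T(u,X_c) = d_{f(T)}(u,X_c)$; hence the first $k'$ coordinates of the code of any $f(T)$-vertex are unchanged, so two vertices of $f(T)$ keep distinct codes. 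Moreover every leg vertex carries a fresh color and so has a $0$ in some fresh coordinate, whereas every $f(T)$-vertex has all fresh coordinates $\ge 1$; this separates leg vertices from $f(T)$-vertices.

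It remains to separate leg vertices from one another, and here the useful identity is that a depth-$i$ vertex $v$ of a leg at $b$ satisfies $d_T(v,X_c) = i + d_{f(T)}(b,X_c)$ for every old color $c$, since the only exit from the leg is through $b$. Two vertices on the same leg then have different depths and hence different old-coordinate codes; and for vertices on legs hanging from two distinct end-branches $b \ne b'$, suppose their old codes coincided: then $d_{f(T)}(b,X_c) - d_{f(T)}(b',X_c)$ would be a constant (the difference of the two depths) for all $c$. The key observation is that in a locating coloring each color code contains exactly one $0$, at the vertex's own color, so two distinct codes can never differ by a nonzero constant; this forces the codes of $b$ and $b'$ to be equal, contradicting that $c'$ is locating. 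Thus legs at distinct branches, and vertices at distinct depths, are already separated by the old coordinates alone.

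The one remaining case, two vertices at the same depth $i$ on two different legs of the same branch $b$, is exactly where the old coordinates fail (they are identical, both equal to $i + d_{f(T)}(b,X_c)$), and it is what the rotating fresh colors are designed for: by construction the two vertices carry different fresh colors, so their codes differ in the corresponding fresh coordinate. Checking these cases completes the verification that the extended coloring is locating, and since it uses at most $k' + \Delta$ colors with $\Delta < \infty$ (bounded degree), $\chi_L(T)$ is finite. I expect the main obstacle to be precisely the cross-branch comparison: one must be sure that attaching legs everywhere, possibly infinitely many and infinitely long, never collapses two color codes, and the cleanest way to control this is the unique-zero argument above, which reduces the entire infinite interaction to the single fact that $c'$ already separates the end-branches inside $f(T)$.
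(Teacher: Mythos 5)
Your proposal is correct and follows essentially the same strategy as the paper's proof: keep the locating coloring of $f(T)$ and spend at most $\Delta$ fresh colors on the removed end-paths, separating end-paths at the same branch by which fresh color they carry and end-paths at different branches by the (already distinct) codes of the branch vertices. The only differences are in execution --- the paper alternates $c(b)$ with a single fresh color along the $i$-th end-path, while you rotate through fresh colors only, and you spell out the cross-branch step via the ``each code has exactly one zero, so two codes cannot differ by a nonzero constant'' observation, which the paper leaves implicit.
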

\begin{proof}
    Let $\chi_L(f(T))=m$ for some positive integer $m$ and let $c$ be a locating $m$-coloring of $f(T)$. We will construct $c'$, a locating $(m+\Delta)$-coloring of $T$. Consider the following coloring algorithm.
    
    \begin{enumerate}
        \item For every end-branch of $T$, assign an ordering of all end-paths starting from the end-branch. 
        \item For every vertex $v$ of $T$;
    \begin{enumerate}
        \item If $v$ is in $f(T)$, then $c'(v)=c(v)$; 
        \item If $v$ is not in $f(T)$, then let $b$ be the nearest end-branch from $v$;
        \item If there are more than one end-paths from $b$, suppose that $v$ is located in the $i^\text{th}$ end-path from $b$, then
        \begin{align*}
            c'(v)=\begin{cases}
            c(b), & d(b,v) \text{ is even,}\\
            m+i, & d(b,v) \text{ is odd.}
            \end{cases}.
        \end{align*}
        \item If there is only one end-path from $b$, then
        \begin{align*}
            c'(v)=\begin{cases}
            m+2, & d(b,v) \text{ is even,}\\
            m+1, & d(b,v) \text{ is odd.}
            \end{cases}.
        \end{align*}
    \end{enumerate}
    \end{enumerate}
    
    Since $T$ has bounded degree, all the vertices are colored by no more than $m+\Delta$ colors. Now, we prove that $c'$ is a locating coloring of $T$. Consider two vertices having the same distances to the colors $m+1$, $m+2,\cdots$, and $m+\Delta$. 
These two vertices must be in one of these two conditions: (i) both in $f(T)$ or (ii) both in the $i^\text{th}$ end-paths from different end-branches with the same distance to the end-branch. If these two vertices satisfy the first condition, then they will be distinguished by the color class that distinguishes them in $c$. If the second condition is satisfied then they are distinguished by the color class that distinguishes their nearest end-branches.
\end{proof}

%\section{Infinite trees with infinite dimension}

Related to Conjecture $\ref{TreeChar}$, the contrapositive of the $(\Rightarrow)$ part states that \begin{align}\label{->}
    \text{If $f^n(T)\ne P$ for all $n$, then $\chi_L(T)=\infty$.}
\end{align}
We will provide some examples that support this statement. 
We consider two conditions for $f^n(T)\ne P$ for all $n$,
\begin{enumerate}[label=(\roman*)]
    \item $T$ has no end-path (graph $T_k$ in Figure \ref{Tk}); or
    \item $f^k(T)=T$ for some $k$ (graph $G_n$ in Figure \ref{Gk}).
\end{enumerate}

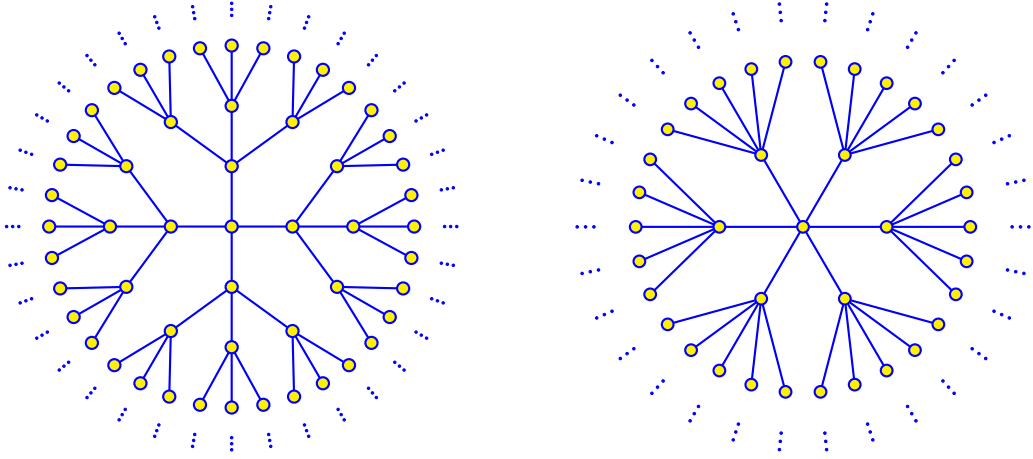
\begin{figure}
    \centering
    \begin{tikzpicture}[scale=0.8]
    \def\k {4} %genap
    \def\r {3}
    \def\R {0.1}

    \foreach \a in {1,...,\r}{
        \tikzmath{\b=\k*(\k-1)^(\a-1);}
        \foreach \t in {1,...,\b}{
            \tikzmath{\d=\t*360/\b;\c=round(\t/(\k-1))*(\k-1)*360/\b;}
            \draw[color=blue,fill=yellow,thick] ({\a*cos(\d)},{\a*sin(\d)})-- ({(\a-1)*cos(\c)},{(\a-1)*sin(\c)});
        }
    }
    \draw[color=blue,fill=yellow,thick] (0,0)circle(\R);
    \foreach \a in {1,...,\r}{
        \tikzmath{\b=\k*(\k-1)^(\a-1);}p
        \foreach \t in {1,...,\b}{
            \tikzmath{\d=\t*360/\b;\c=round(\t/(\k-1))*(\k-1)*360/\b;}
            \draw[color=blue,fill=yellow,thick] ({\a*cos(\d)},{\a*sin(\d)})circle(\R);
        }
    }
    \tikzmath{\b=\k*(\k-1)^(\r-1);}
    \foreach \t in {1,...,\b}{
        \tikzmath{\d=\t*360/\b;}
        \foreach \a in {0.5,0.6,0.7}{
            \draw[fill=blue,color=blue] ({(\r+\a)*cos(\d)},{(\r+\a)*sin(\d)})circle(0.02);
            }
        }
\end{tikzpicture}\qquad\qquad
\begin{tikzpicture}[scale=1.1]
    \def\k {6} %genap
    \def\r {2}
    \def\R {0.07}

    \foreach \a in {1,...,\r}{
        \tikzmath{\b=\k*(\k-1)^(\a-1);}
        \foreach \t in {1,...,\b}{
            \tikzmath{\d=\t*360/\b;\c=round(\t/(\k-1))*(\k-1)*360/\b;}
            \draw[color=blue,fill=yellow,thick] ({\a*cos(\d)},{\a*sin(\d)})-- ({(\a-1)*cos(\c)},{(\a-1)*sin(\c)});
        }
    }
    \draw[color=blue,fill=yellow,thick] (0,0)circle(\R);
    \foreach \a in {1,...,\r}{
        \tikzmath{\b=\k*(\k-1)^(\a-1);}
        \foreach \t in {1,...,\b}{
            \tikzmath{\d=\t*360/\b;\c=round(\t/(\k-1))*(\k-1)*360/\b;}
            \draw[color=blue,fill=yellow,thick] ({\a*cos(\d)},{\a*sin(\d)})circle(\R);
        }
    }
    \tikzmath{\b=\k*(\k-1)^(\r-1);}
    \foreach \t in {1,...,\b}{
        \tikzmath{\d=\t*360/\b;}
        \foreach \a in {0.5,0.6,0.7}{
            \draw[color=blue,fill=blue] ({(\r+\a)*cos(\d)},{(\r+\a)*sin(\d)})circle(0.015);
            }
        }
\end{tikzpicture}
    \caption{Regular infinite graphs $T_4$ and $T_6$.}
    \label{Tk}
\end{figure}

For $k\geq2$, let $T_k$ be the infinite $k$-regular tree, see Figure \ref{Tk}.

\begin{theorem}
For $k\geq 3$, $\chi_L(T_k)=\infty$.
\end{theorem}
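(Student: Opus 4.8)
The plan is to argue by contradiction with a counting argument that pits the exponential growth of balls in $T_k$ against the merely polynomial number of color codes such a ball can realize. Suppose $\chi_L(T_k)=m<\infty$ and fix a locating $m$-coloring $c$ with color classes $X_1,\dots,X_m$. Fix any vertex $r$ as a root and write $B_R=\{v\in V(T_k):d(v,r)\le R\}$ for the ball of radius $R$.

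The first step is a Lipschitz observation on color codes: for every vertex $v$ and every color $i$, the triangle inequality gives $|d(v,X_i)-d(r,X_i)|\le d(v,r)$. Hence for each $v\in B_R$ the coordinate $d(v,X_i)$ lies in the integer interval $[\,d(r,X_i)-R,\ d(r,X_i)+R\,]$, which contains at most $2R+1$ values. Since a color code is an $m$-tuple of such coordinates, the number of \emph{distinct} color codes that can occur among the vertices of $B_R$ is at most $(2R+1)^m$, a polynomial in $R$ for fixed $m$.

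The second step counts vertices: in the $k$-regular tree the number of vertices at distance exactly $t\ge 1$ from $r$ is $k(k-1)^{t-1}$, so $|B_R|=1+\sum_{t=1}^{R}k(k-1)^{t-1}\ge k(k-1)^{R-1}\ge (k-1)^R\ge 2^R$, where the last inequality uses $k\ge 3$. Because $2^R$ eventually dominates the polynomial $(2R+1)^m$, I can choose $R$ large enough that $|B_R|>(2R+1)^m$. By the pigeonhole principle two distinct vertices of $B_R$ then share the same color code, contradicting that $c$ is locating. Therefore no finite $m$ works and $\chi_L(T_k)=\infty$.

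The step doing the real work is the exponential-versus-polynomial comparison, and the only point needing care is that the color-code coordinates are genuinely trapped in an interval of width $O(R)$ (the $1$-Lipschitz property) while the ball grows with base $k-1\ge 2$; this is exactly where the hypothesis $k\ge 3$ is essential. Indeed, for $k=2$ one has $T_2=P_{2\infty}$, whose balls grow only linearly, so the argument correctly fails there, consistent with $\chi_L(P_{2\infty})=3$ from Theorem \ref{path}. I also note that Theorem \ref{Delta xl} by itself is not enough here: it only forces $m$ to be large relative to $k$, not infinite, so the counting argument is what closes the gap.
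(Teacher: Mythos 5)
Your proof is correct and follows essentially the same approach as the paper: a pigeonhole argument comparing the exponential growth of $T_k$ (sphere/ball of radius $R$) against the at most polynomially many, $O(R^m)$, color codes available there. The only cosmetic differences are that you count over the ball rather than the sphere and use the two-sided Lipschitz bound $|d(v,X_i)-d(r,X_i)|\le d(v,r)$ in place of the paper's choice of $n\ge\max_i a_i$ to trap the coordinates.
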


\begin{proof}
By a contradiction, suppose there was an integer $k\geq 3$ with $\chi_L(T_k)=t$. 
Let $c$ be a locating $t$-coloring of $T_k$ and let $v \in T$ with $a_c(v)=(a_1,a_2,\cdots,a_t)$. Let $D_n(v)$ be the set of all vertices at distance $n$ from $v$, then $|D_n(v)|=k(k-1)^{n-1}$. Since $n^\alpha=o\left(\beta^n\right)$ for every real number $\alpha,\beta>1$, there is an integer $N$ such that for every $n\geq N$, $(2n)^t<k(k-1)^{n-1}$.

Let $n=\max(a_1,a_2,\cdots,a_t,N)$. Note that the distance from a vertex $u\in D_n(v)$ to any color $i$ is no more than $2a_i \leq 2n$, and hence a vertex in $D_n(v)$ has at most $(2n)^t$ possible color codes. Since $D_n(T)$ has $k(k-1)^{n-1}$ vertices, then there are at least two vertices with the same color code, a contradiction.
\end{proof}

Let $T(n,k)$ be the complete $k$-level $n$-ary tree (\cite{welary}) and $P_{1\infty}$ be an infinite path in one end with vertices $\{v_0,v_1,v_2,\dots\}$. Let $G_n$ be the graph obtained from $P_{1\infty}$ by attaching the center of $T(n,i)$ to $v_i$, see Figure \ref{Gk}.

\begin{figure}
    \begin{center}
    \def\xs {0.25}
    \def\ys {0.6}
    \begin{tikzpicture}[xscale=\xs,yscale=\ys] %T_n(i)
		\def\n {2}
		\def\d {5}
		\def\i {5}
		\def\rr {0.4}
		\draw[color=blue,thick,fill=red] ({\d*\i+\n^\i},0)--(0,0);
		\draw[color=blue,thick,fill=red] (0,0)circle({\rr*\ys} and {\rr*\xs}) (0,1.5)node{\Large $G_{\n}$};
		\foreach \y in {1,1.5,2} \draw[fill=blue,color=blue] ({\d*\i+\n^\i+\y},0)circle({\rr*\ys/2} and {\rr*\xs/2});
		\foreach \k in {1,...,\i}{
		\foreach \y in {1,...,\k}{
			\tikzmath{\a=\n^(\k-\y+1);\b=\n^(\y-1);}
			\foreach \x in {1,...,\a}{
			    \tikzmath{\m=mod(\x-1,\n)+1;}
			    \ifodd \k
			    \draw[thick,color=blue] (\d*\k+\b*\x+\n*\b*0.5-\m*\b,\y-\k)--(\d*\k+\b*\x-\b*0.5,\y-\k-1);
			    \else 
			    \draw[thick,color=blue] (\d*\k+\b*\x+\n*\b*0.5-\m*\b,-\y+\k)--(\d*\k+\b*\x-\b*0.5,-\y+\k+1);
			    \fi
	        }
	    }
		\tikzmath{\b=\n^\k/2;}
		\draw[color=blue,thick,fill=red] (\d*\k+\b,0)circle({\rr*\ys} and {\rr*\xs});
	    \foreach \y in {1,...,\k}{
			\tikzmath{\a=\n^(\k-\y+1);\b=\n^(\y-1);}
			\foreach \x in {1,...,\a}{
			    \ifodd \k
			    \draw[thick,color=blue,fill=red] (\d*\k+\b*\x-\b*0.5,\y-\k-1)circle({\rr*\ys} and {\rr*\xs});
			    \else
			    \draw[thick,color=blue,fill=red] (\d*\k+\b*\x-\b*0.5,-\y+\k+1)circle({\rr*\ys} and {\rr*\xs});
			    \fi
	        }
	    }
	    }
\end{tikzpicture}
\end{center}
\begin{center}
    \def\xs {0.3}
    \def\ys {0.6}
    \begin{tikzpicture}[xscale=\xs,yscale=\ys] %T_n(i)
		\def\n {3}
		\def\d {2}
		\def\i {3}
		\def\rr {0.4}
		\draw[color=blue,thick,fill=red] ({\d*\i+\n^\i},0)--(0,0); \draw[color=blue,thick,fill=red] (0,0)circle({\rr*\ys} and {\rr*\xs}) (0,1.5)node{\Large $G_{\n}$};
		\foreach \y in {1,1.5,2} \draw[fill=blue,color=blue] ({\d*\i+\n^\i+\y},0)circle({\rr*\ys/2} and {\rr*\xs/2});
		\foreach \k in {1,...,\i}{
		\foreach \y in {1,...,\k}{
			\tikzmath{\a=\n^(\k-\y+1);\b=\n^(\y-1);}
			\foreach \x in {1,...,\a}{
			    \tikzmath{\m=mod(\x-1,\n)+1;}
			    \ifodd \k
			    \draw[thick,color=blue] (\d*\k+\b*\x+\n*\b*0.5-\m*\b,\y-\k)--(\d*\k+\b*\x-\b*0.5,\y-\k-1);
			    \else 
			    \draw[thick,color=blue] (\d*\k+\b*\x+\n*\b*0.5-\m*\b,-\y+\k)--(\d*\k+\b*\x-\b*0.5,-\y+\k+1);
			    \fi
	        }
	    }
		\tikzmath{\b=\n^\k/2;}
		\draw[color=blue,thick,fill=red] (\d*\k+\b,0)circle({\rr*\ys} and {\rr*\xs});
	    \foreach \y in {1,...,\k}{
			\tikzmath{\a=\n^(\k-\y+1);\b=\n^(\y-1);}
			\foreach \x in {1,...,\a}{
			    \ifodd \k
			    \draw[thick,color=blue,fill=red] (\d*\k+\b*\x-\b*0.5,\y-\k-1)circle({\rr*\ys} and {\rr*\xs});
			    \else
			    \draw[thick,color=blue,fill=red] (\d*\k+\b*\x-\b*0.5,-\y+\k+1)circle({\rr*\ys} and {\rr*\xs});
			    \fi
	        }
	    }
	    }
\end{tikzpicture}
\end{center}
    \caption{Graphs $G_2$ and $G_3$}
    \label{Gk}
\end{figure}
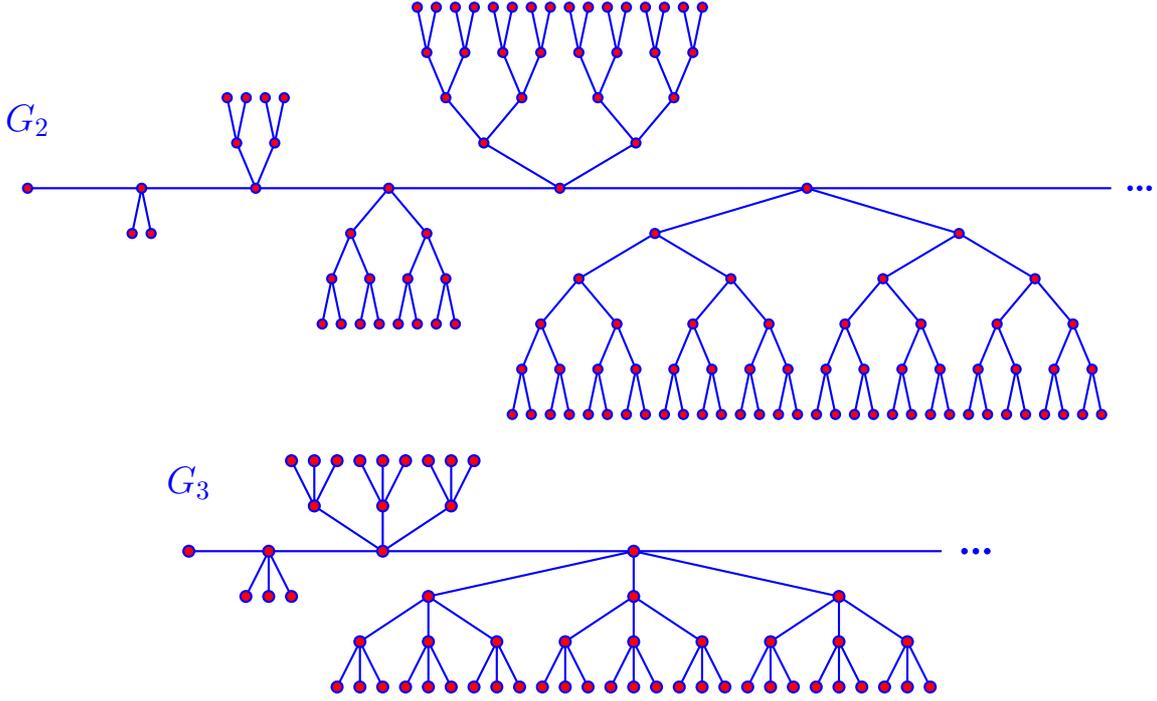

\begin{theorem}
    For $n\geq 2$, $\chi_L(G_n)=\infty$.
\end{theorem}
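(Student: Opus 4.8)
The plan is to argue by contradiction, mirroring the sphere-counting argument used for $T_k$ but adapting it to the non-regular structure of $G_n$. Suppose $\chi_L(G_n)=t<\infty$ and fix a locating $t$-coloring $c$ with color classes $X_1,\dots,X_t$. The whole point is that the trees $T(n,i)$ hanging off the path grow without bound, so for large $i$ the tree $T(n,i)$ has exponentially many leaves but only polynomially many admissible color codes, forcing a collision.

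First I would set up the relevant objects for a fixed (large) index $i$. Let $r_i$ denote the center of $T(n,i)$, i.e.\ the vertex adjacent to $v_i$, and let $L_i$ be the set of leaves of $T(n,i)$. Since $T(n,i)$ is a complete $n$-ary tree, all its leaves lie at a common distance from $r_i$, and their number grows exponentially in $i$ (it equals $n^{i-1}$, with leaves at distance $i-1$ from $r_i$, in the standard convention).

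The key structural observation, which is the heart of the argument, is a bottleneck phenomenon: every path from a leaf $u\in L_i$ to a vertex outside $T(n,i)$ must pass through the edge $r_iv_i$. Hence for any vertex $x\notin T(n,i)$ and any leaf $u\in L_i$ we have $d(u,x)=(i-1)+d(r_i,x)=i+d(v_i,x)$, which does not depend on which leaf $u$ was chosen. Consequently, if $S_i\subseteq\{1,\dots,t\}$ denotes the set of colors that actually occur inside $T(n,i)$, then for every color $j\notin S_i$ the coordinate $d(u,X_j)$ is constant over all $u\in L_i$, so such colors cannot distinguish two leaves. Thus leaves can only be separated by the coordinates indexed by $S_i$; and for those, since a color $j\in S_i$ occurs at a vertex of $T(n,i)$ within distance $i-1$ of $r_i$, every leaf satisfies $d(u,X_j)\le d(u,r_i)+d(r_i,X_j)\le 2(i-1)$. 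I would then finish by counting: each leaf's code is determined by its $|S_i|\le t$ coordinates in $S_i$, each an integer in $\{0,1,\dots,2(i-1)\}$, so the number of distinct codes realizable by leaves is at most $(2i-1)^{t}$. As there are $n^{i-1}$ leaves and $n\ge 2$, the exponential eventually dominates the polynomial (using $m^{\alpha}=o(\beta^{m})$, exactly as in the $T_k$ proof), so for $i$ with $n^{i-1}>(2i-1)^{t}$ two leaves share a color code, contradicting that $c$ is locating.

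The main obstacle, and the genuine difference from the $T_k$ argument, is establishing the bottleneck reduction to the internal colors $S_i$: without it, the coordinates $d(u,X_j)$ for colors living far out along the path could a priori grow with $i$ and vary from leaf to leaf, and the clean bound $2(i-1)$ would fail. Once one notes that all leaves sit at the same depth and exit through the single edge $r_iv_i$, the external colors become constant coordinates and drop out of the count, after which the exponential-versus-polynomial comparison closes the argument as before.
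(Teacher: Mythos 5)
Your proposal is correct and follows essentially the same route as the paper's proof: the paper likewise observes that two leaves of $T(n,i)$ have the same distance to every vertex outside $T(n,i)$ and so can only be distinguished by colors occurring inside, bounds the relevant distances by the diameter of $T(n,i)$, and compares the resulting polynomially many codes against the exponentially many leaves. The only differences are cosmetic conventions on the depth and leaf count of $T(n,i)$, which do not affect the argument.
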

\begin{proof}
Suppose that $\chi_L(G_n)=t<\infty$ for some $n$. Since $(2i)^t=o\left(n^i\right)$, there is an integer $K$ such that for $i\geq K$, $(2i)^t<n^i$. Note that two leaves in $T(n,i)$ can be only distinguished by the color of a vertex in $T(n,i)$, as they have the same distance to any vertex outside of $T(n,i)$. Since the diameter of $T(n,i)$ is $2i$, then there are at most $(2i)^t$ distinguished leaves in $T(n,i)$. Due to the fact that the number of leaves in $T(n,i)$ is $n^i$ and $(2i)^t<n^i$ for $i\geq K$, there are at least two leaves not distinguished in $T(n,i)$, a contradiction.
\end{proof}

\section{Coloring algorithm}

As a direct consequence of the proof of Theorem \ref{TreeChar}, we have the following corollary.
\begin{corollary}
    Let $T$ be a tree (finite or infinite) with maximum degree $\Delta$, then $\chi_L(T)\leq \chi_L(f(T))+\Delta$.
\end{corollary}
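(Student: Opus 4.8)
The plan is to read the inequality directly off the coloring construction used in the proof of Theorem \ref{<-}, after disposing of a trivial case. First I would dispatch the case $\chi_L(f(T))=\infty$: here the claimed bound reads $\chi_L(T)\le\infty$, which holds vacuously, so there is nothing to prove. Hence I may assume $\chi_L(f(T))=m$ is finite and fix a locating $m$-coloring $c$ of $f(T)$. Note that for an infinite $T$ the existence of a finite $\Delta$ is exactly the ``bounded degree'' hypothesis of Theorem \ref{<-}, while for a finite $T$ it is automatic; so the statement is well posed in both regimes.

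Next I would apply verbatim the algorithm from the proof of Theorem \ref{<-} to extend $c$ to a coloring $c'$ of $T$: each vertex of $f(T)$ keeps its color, while a vertex $v$ lying on an end-path receives $c(b)$ (resp.\ $m+2$) at even distance from its nearest end-branch $b$ and a fresh color $m+i$ (resp.\ $m+1$) at odd distance, where $i$ indexes the end-path of $b$ containing $v$ in the chosen ordering at $b$. The correctness argument given there—that two vertices agreeing in distance to every fresh color class are either both in $f(T)$, where they are separated by the class that separates them under $c$, or sit symmetrically on equally-indexed end-paths of distinct branches, where they are separated by the class separating those branches—never invokes the infinitude of $T$. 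I would simply remark that it therefore applies unchanged to finite trees, so $c'$ is a locating coloring of $T$ in all cases.

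The heart of the corollary is then the count of colors, and this is the step I would treat most carefully. The fresh colors actually used are exactly $m+1,\dots,m+j$, where $j$ is the largest number of end-paths emanating from a single end-branch (with the convention that a branch bearing a lone end-path contributes the pair $m+1,m+2$). Since the end-paths at a branch $b$ occupy distinct edges at $b$, their number is at most $\deg(b)\le\Delta$; and whenever an end-branch exists one has $\Delta\ge 3$, so the lone-end-path case safely uses only $m+1,m+2\le m+\Delta$. Thus $j\le\Delta$, the palette of $c'$ has at most $m+\Delta$ colors, and combining with the trivial case yields $\chi_L(T)\le\chi_L(f(T))+\Delta$. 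The only genuine obstacle is this bookkeeping—verifying that the index $i$ never exceeds $\Delta$ across all branches at once and that the single-end-path convention does not push the palette past $m+\Delta$—since the locating property itself is inherited wholesale from Theorem \ref{<-}.
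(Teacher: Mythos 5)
Your proposal is correct and matches the paper's intent exactly: the paper states this corollary as a direct consequence of the construction in the proof of Theorem \ref{<-}, which already produces a locating $(m+\Delta)$-coloring, and your extra bookkeeping (the vacuous case $\chi_L(f(T))=\infty$, the applicability to finite trees, and the check that the number of end-paths at an end-branch is at most $\deg(b)\le\Delta$) only makes explicit what the paper leaves implicit.
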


To get a better upper bound for $\chi_L(T)$ for any tree, we combine our coloring algorithm with the coloring algorithm in \cite{DIDP}. Let $T$ be a bounded degree tree with palms $P_1, P_2, \cdots$. Let $u_i$ be the branch vertex in $P_i$. Let $l_i$ be the number of end-paths in $P_i$ and $p_i$ be the number of such end-paths with length one. Let $Q_{i,j}$ be the $j^\text{th}$ end-path in $P_i$.

\begin{theorem}\label{<}
    Let $T$ be a bounded degree tree, then
    $\chi_L(T)\leq \chi_L(f(T))+\max\left\{p_i,\left\lceil\sqrt{l_i}\right\rceil\right\}$.
\end{theorem}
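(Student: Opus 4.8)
The plan is to mimic the extension strategy of the proof of Theorem~\ref{<-}. Starting from a locating $m$-coloring $c$ of $f(T)$ with $m=\chi_L(f(T))$, I would build a coloring $c'$ of $T$ that agrees with $c$ on $f(T)$ and colors the end-paths of every palm using only the extra colors $\{m+1,\dots,m+s\}$, where $s=\max_i\max\{p_i,\lceil\sqrt{l_i}\rceil\}$. This maximum is finite because bounded degree forces $l_i,p_i\le\Delta$. The decisive point is that these $s$ extra colors are to be \emph{reused} across all palms rather than allotted per palm, so that the total palette has size $m+s$ rather than $m+\Delta$.

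For the global argument I would reproduce the reduction behind Theorem~\ref{<-}. First, for every $x\in f(T)$ and every old color $\mu\le m$ one has $d_T(x,\mu)=d_{f(T)}(x,\mu)$: a color $\mu$ that is not some $c(u_i)$ never occurs inside a palm, while if $\mu=c(u_i)$ is reused deep inside $P_i$ the branch vertex $u_i\in f(T)$ already carries color $\mu$ and is no farther than any in-palm copy, so the distance is unchanged. Hence $a_{c'}$ and $a_c$ agree on the first $m$ coordinates over $f(T)$, and two vertices of $f(T)$ are separated by $c$. Second, exactly as in Theorem~\ref{<-}, if two vertices agree on all of the new color classes they cannot be of mixed type, since a palm vertex at depth $d$ meets the new colors coding the other legs of its palm only after passing its branch, at distances growing with $d$, whereas the corresponding distances of a fixed $f(T)$ vertex are constant; and two palm vertices are forced to the same depth in the same palm, because equality of the old coordinates would otherwise express one branch code $a_c(u_i)$ as a uniform positive shift of another, which is impossible as every color code has a zero entry (this shift obstruction is precisely what legitimates reusing the new colors between distinct palms).

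It then remains to color a single palm so that any two vertices equidistant from its branch $u_i$ but lying in different legs receive distinct codes, using only $\max\{p_i,\lceil\sqrt{l_i}\rceil\}$ new colors; this is the ingredient I would import from \cite{DIDP}. The device producing $\sqrt{l_i}$ in place of $l_i$ is a two-symbol code per leg: color the depth-$1$ vertex with one of the $s$ new colors $\alpha$, color the depth-$2$ vertex with one of the $s$ symbols $\beta\in(\{c(u_i)\}\cup\{m+1,\dots,m+s\})\setminus\{\alpha\}$, and fill depths $\ge 3$ by the canonical alternation $\alpha,c(u_i),\alpha,c(u_i),\dots$ determined by $\alpha$. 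Admitting $c(u_i)$ as the second symbol restores the lost diagonal and yields $s^2$ admissible codes, so $s=\lceil\sqrt{l_i}\rceil$ suffices to give the legs of length $\ge 2$ pairwise distinct codes (as $s^2\ge l_i$); the $p_i$ legs of length one are single vertices separable only by their own color, accounting for the $p_i$ term, and a length-one leg is always separated from the depth-$1$ vertex of a longer leg by that leg's depth-$2$ color.

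The main obstacle is verifying that \emph{distinct codes force separation at every depth}, not merely at depths $1$ and $2$: for two legs sharing $\alpha$ but differing in $\beta$, the vertices at depth $\ge 3$ carry identical colors and must be separated solely through their distance to the differing depth-$2$ color, and one must also handle legs of unequal length together with the canonical filling and the reused symbol $c(u_i)$. I expect this to reduce to a finite case analysis on the parity of the depth and on whether each probed color recurs within a leg, in the spirit of the computation closing Theorem~\ref{<-}; the only genuinely new bookkeeping is checking that permitting $c(u_i)$ as the second code symbol never collapses two codes, which is exactly where the saving from $l_i$ to $\lceil\sqrt{l_i}\rceil$ is realized.
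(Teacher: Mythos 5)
Your overall strategy is the same as the paper's (extend a locating coloring of $f(T)$ into the palms using $s=\max\{p_i,\lceil\sqrt{l_i}\rceil\}$ shared new colors, encode each leg by an ordered pair of symbols drawn from the new colors together with $c(u_i)$, and fall back on the locating coloring of $f(T)$ to separate vertices in different palms), and your global reductions match the paper's proof. However, one step fails: the claim that ``a length-one leg is always separated from the depth-$1$ vertex of a longer leg by that leg's depth-$2$ color.'' Take a palm with branch $u_i$, a length-one leg whose single vertex $v$ you color $\alpha$, and a longer leg to which you assign the code $(\alpha,c(u_i))$; nothing in your scheme forbids this combination, since you allocate the $p_i$ single-vertex colors and the $s^2$ two-symbol codes independently. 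That longer leg is then colored $\alpha,c(u_i),\alpha,c(u_i),\dots$ throughout, so its depth-$1$ vertex $w$ and the leaf $v$ both have color $\alpha$, both are adjacent to $u_i$ (which already carries $c(u_i)$, so the ``separating'' depth-$2$ color is at distance $1$ from each), and neither leg contains any new color other than $\alpha$; every other color class is therefore reached from both $v$ and $w$ only through $u_i$, at identical distances. Hence $a_{c'}(v)=a_{c'}(w)$ and the coloring is not locating.

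The missing idea is exactly the coordination the paper's Algorithm \ref{algo1} enforces by sorting the end-paths of each palm in ascending order of length: the $p_i$ length-one legs must be made to occupy the codes of the form $(\nu,c(u_i))$, so that no longer leg can simultaneously share its depth-$1$ color with a length-one leg and have $c(u_i)$ as its second symbol. With that amendment the count still works, since the $p_i$ short legs and the $l_i-p_i$ longer legs together consume at most $l_i\le s^2$ of the available codes, and your construction becomes a minor variant of the paper's, which alternates the full code pair along each leg rather than switching to the $\alpha,c(u_i)$ alternation after depth $2$. The within-palm case analysis you defer is routine and of the same kind the paper itself leaves implicit, so apart from the flaw above the proposal is essentially the paper's argument.
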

\begin{proof}
    Let $T$ be any tree. Color the tree $T$ using Algorithm $\ref{algo1}$. We will prove that the coloring in Algorithm $\ref{algo1}$ is a locating coloring.
    Note that every vertex in $P_i$ is colored by one of $\{c(u_i),n+1,n+2,\cdots,n+m\}$, so $d_T(v,c_i)=d_{f(T)}(v,c_i)$ for every vertex $v$ in $f(T)$. Any two vertices in $f(T)$ is distinguished by the locating coloring of $f(T)$. A vertex in $f(T)$ and a vertex in $T-f(T)$ are distinguished by colors $\{n+1,n+2,\cdots,n+m\}$.
    
    Suppose there were two vertices $v$ and $w$ with the same color code, then $v$ and $w$ must be in different palms $P_i$ and $P_j$ with $d(v,u_i)=d(w,u_j)$ and $c(u_i)=c(u_j)$. Since $f(T)$ is colored by a locating coloring, then the color that distinguishes $u_i$ and $u_j$ will also distinguish $v$ and $w$, a contradiction.
\end{proof}

\begin{algorithm}
\label{algo1}
\caption{Locating coloring of a tree $T$.}
\begin{algorithmic}
\Procedure{ A locating coloring of a tree $T$}{}
   \State Input: Tree $T$, locating coloring of $f(T)$
   \State Output: A locating coloring $c$ of $T$
   \State Color every vertex in $f(T)$ with the existing locating coloring.
   \State $n \leftarrow \max\{c(v)\mid v\in f(T)\}$
   \State $m \leftarrow \max\{p_i,\sqrt{l_i}\}$
   \State Let $A=\{(a,b) \mid n+1\leq a,b\leq n+m, a\ne b\}$
   \For {$i=1,2,\cdots$}
    \State Sort all end-paths in $P_i$ in ascending order
    \For {$j=1,2,\cdots,l_i$}
     \If {$j\leq m$}
      \State Color $Q_{i,j}$ using $n+j$ and $c(u_i)$ alternately
     \Else 
      \State $k\leftarrow j-m$
      \State Let $(a_k,b_k)$ be the $k^\text{th}$ element in $A$
      \State Color $Q_{i,j}$ using $a_k$ and $b_k$ alternately
     \EndIf
    \EndFor
   \EndFor
\EndProcedure\end{algorithmic}
\end{algorithm}

Although Theorem \ref{<} and Algorithm \ref{algo1} give a good approximation for the locating chromatic number of a tree, it is not practical to use the theorem and the algorithm because we need the locating chromatic number of $f(T)$ and its locating coloring. 
If $T$ is a finite tree, then it is guaranteed that $f^n(T)$ is a path for some $n$. Since the locating chromatic number of a path and its locating coloring is known (Thm \ref{path}), we could apply Theorem \ref{<} and Algorithm \ref{algo1} repeatedly to get a more practical approximation of $\chi_L(T)$ and it's coloring algorithm. This observation also emphasizes the importance of conjecture \ref{TreeChar}, if the conjecture is true, we have a locating coloring of any infinite tree with finite locating chromatic number.

Let $T$ be a bounded degree tree (finite or infinite) such that $f^n(T)$ is a path. For $i=0,1,\cdots,n$, define $T_i:=f^i(T)$, $l^{(i)}_{max}$ the maximum number of end-paths from a palm in $T_i$, and $p^{(i)}_{max}$ the maximum number of end-paths of length one from a palm in $T_i$.

\begin{theorem}\label{<<}
    Let $T$ be a bounded degree tree such that $f^n(T)$ is a path $P$, then
    $$\chi_L(T)\leq \chi_L(P)+\sum_{i=0}^{n-1}\max\left\{p^{(i)}_{max}, \left\lceil\sqrt{l^{(i)}_{max}}\right\rceil\right\}.$$
\end{theorem}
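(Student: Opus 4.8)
The plan is to prove the inequality by induction on $n$, using Theorem \ref{<} as the single-step estimate. Before starting the induction I would reconcile the notation: the worst-palm quantity appearing in Theorem \ref{<}, namely $\max_j\max\{p_j,\lceil\sqrt{l_j}\rceil\}$, equals $\max\{p_{max},\lceil\sqrt{l_{max}}\rceil\}$, since $\max$ distributes over the two coordinates and $\lceil\sqrt{\cdot}\rceil$ is nondecreasing, so $\max_j\lceil\sqrt{l_j}\rceil=\lceil\sqrt{\max_j l_j}\rceil=\lceil\sqrt{l_{max}}\rceil$. Hence Theorem \ref{<} applied to $T$ reads
\[
\chi_L(T)\leq \chi_L(f(T))+\max\left\{p^{(0)}_{max},\left\lceil\sqrt{l^{(0)}_{max}}\right\rceil\right\},
\]
which is exactly the $i=0$ term of the desired sum.

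For the base case $n=0$ we have $T=P$, the sum is empty, and the claim reduces to $\chi_L(T)\leq\chi_L(P)$, which holds trivially. For the inductive step, I would suppose the statement holds for every bounded degree tree $S$ with $f^{n-1}(S)$ a path, and let $T$ satisfy $f^n(T)=P$. Set $T'=f(T)$. Then $T'$ is again a bounded degree tree, being a subgraph of $T$, and $f^{n-1}(T')=f^{n-1}(f(T))=f^n(T)=P$, so the induction hypothesis applies to $T'$.

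The key bookkeeping step is the index shift $f^i(T')=f^i(f(T))=f^{i+1}(T)=T_{i+1}$, which shows that the palm data of $T'$ at level $i$ coincides with the palm data of $T$ at level $i+1$; that is, $p^{(i)}_{max}(T')=p^{(i+1)}_{max}(T)$ and $l^{(i)}_{max}(T')=l^{(i+1)}_{max}(T)$. Therefore the inductive bound for $T'$,
\[
\chi_L(T')\leq \chi_L(P)+\sum_{i=0}^{n-2}\max\left\{p^{(i)}_{max}(T'),\left\lceil\sqrt{l^{(i)}_{max}(T')}\right\rceil\right\},
\]
reindexes (substituting $i+1\mapsto i$) to $\chi_L(P)+\sum_{i=1}^{n-1}\max\{p^{(i)}_{max},\lceil\sqrt{l^{(i)}_{max}}\rceil\}$. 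Combining this with the single-step estimate displayed above and absorbing the $i=0$ term completes the sum from $0$ to $n-1$, giving the claimed bound.

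I expect no serious obstacle once Theorem \ref{<} is in hand: the whole argument is an induction whose only delicate points are the reindexing of the palm parameters under $f$ and the observation that the worst-palm maximum of Theorem \ref{<} agrees with the split form $\max\{p_{max},\lceil\sqrt{l_{max}}\rceil\}$ used in the present statement. I would also check at each stage that the hypotheses are genuinely met, namely that $f(T)$ inherits bounded degree from $T$ and that $f^{n-1}(f(T))=P$, so that both Theorem \ref{<} and the induction hypothesis apply legitimately.
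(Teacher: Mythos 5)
Your proposal is correct and is essentially the paper's own argument: the paper's proof consists of the single line ``Use Theorem \ref{<} repeatedly,'' and your induction on $n$ with the index shift $f^i(f(T))=f^{i+1}(T)$ is exactly the careful spelling-out of that repeated application.
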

\begin{proof}
    Use Theorem \ref{<} repeatedly.
\end{proof}

We compare our coloring algorithm with some known coloring algorithms, see Table \ref{table}. For larger graphs with many branches, Theorem \ref{<<} works much better than FM\cite{FM19} and BP\cite{DIDP}. For small graphs, Theorem \ref{<<} is not always better than BP\cite{DIDP}. The case of BP\cite{DIDP} is better than Theorem \ref{<<} happens when BP\cite{DIDP} is equal to the exact value, and in this case Theorem \ref{<<} has the same order of locating chromatic number.

\begin{table}[h!]
	\begin{center}
	\caption{Comparison of the locating-chromatic number of trees}
	\label{table}
	\begin{tabular}{|c|c|c|c|c|}
		\hline 
		\multirow{3}{*}{\textbf{Graph} \ $\mathbf{\textit{T}}$} & \multicolumn{4}{c|}{$\mathbf{\chi_L(\textit{T})}$} \\
		\cline{2-5}
		{} & \multirow{2}{*}{\textbf{Exact value}} & \multicolumn{3}{c|}{\textbf{Computational value}} \\
		\cline{3-5}
		{} &{} & \textbf{FM} \cite{FM19} & \textbf{BP} \cite{DIDP} & \textbf{Thm \ref{<<}}  \\
		\hline
		Amalgamation of star & 
		$m+a$  & $km-2k+2$ & $k+m-1$ & $k+m$ \\
		\hline
%		Banana tree-1  & $a+2$   & $n+1$ &$a+2$ & $a+k+1$\\
%		\hline
		Banana tree  & $k-1$   & $n(k-3)+2$ &$n+k-1$ & $\sqrt{n}+k$\\
		\hline
		Caterpillar  & $n+2$   & $nm-m+2$ & $n+m$ & $n+3$\\
		\hline
		Complete $n$-ary tree & $(n+k-1)$ & $n^k-n^{k-1}+2$ & $n^{k-1}+n$ & $nk+1$ \\
		\hline
		Double star  & $b+1$ & $b+a$ & $b+2$  & $b+2$\\ 
		\hline
		Lobster  & $n+2$ & $mn^2-mn+2$ & $n(m+1)$ & $2n+3$ \\
		\hline
%		Firecracker-1  & 	$4 $  & $n$ & $4$ & 5 \\ 
%		\hline
		Firecracker-1  & 	$k $  & $n(k-3)+2$ & $n+k-2$ & $k+3$ \\ 
		\hline
		Firecracker-2  & 	$k-1 $  & $n(k-3)+2$ & $n+k-2$ & $k+3$ \\ 
		\hline
		Olive tree & $\left\lceil\log_3\left(\frac{n}{4}\right)\right\rceil+3$ & $k+1$& $\lceil\sqrt{k}\rceil+1$ & $\lceil\sqrt{k}\rceil+1$\\
		\hline
		Star	& $n$  & $n$  & $n$ & $n$ \\ 
		\hline 
	\end{tabular}
\end{center}

\vspace{12pt}
\noindent \textbf{Note}:
\vspace{-6pt}
\begin{center}
	\begin{tabular}{p{5cm}p{7.2cm}}
		$\chi_L(T)$ & Locating-chromatic number of $T$\\
		Amalgamation of star & $S_{k,m}$; $H(a-1)<k\leq H(a), a\geq 1$ \cite{asam}\\
%		Banana tree-1 & $B_{n,k}$; $a \geq 1,\ a^2<n \leq (a+1)^2$, $k=2,3$ \cite{asban} \\
		Banana tree & $B_{n,k}$; $k \geq 4,\ 1<n \leq k-1$ \cite{asban} \\
		Caterpillar & $C(m,n)$; $n,m\geq 3$ \cite{asfi}\\
		Complete $n$-ary tree & $T(n,k)$; $n\geq 2$; $k=2,3$ \cite{welary}\\
		Double star & $S(a,b)$; $1\leq a \leq b$, $b \geq 2$  \cite{ref10}\\
		Lobster & $Lb(m,n)$; $2 \leq m \leq 3(n+2)+1$, $n \geq 2$ \cite{syoflob}\\	   
%		Firecracker-1 & $F_{n,k}$; $k=2,3$, $n \geq 7$  \cite{asfi}\\
		Firecracker-1 & $F_{n,k}$; $k\geq 5$, $2 \leq n \leq k-1$  \cite{asfi}\\
		Firecracker-2 & $F_{n,k}$; $k\geq 5$, $ n > k-1$  \cite{asfi}\\
		Olive tree & $O_k$; $k\geq 3$ \cite{YH21} \\
		Star & $S_n$ \cite{ref10} \\
	\end{tabular}
\end{center}

\end{table}

\section*{Acknowledgment}
	This research has been supported by "\textit{PPMI FMIPA ITB}" managed by Faculty of Mathematics and Natural Sciences, Institut Teknologi Bandung.

\end{document}